\newcommand{\chapeau}{{\rlap{\smash{\hbox{\lower4pt\hbox{\hskip1pt$\widehat{\phantom{u}}$}}}}}\mbox{ }}
\DeclareSymbolFont{cyrletters}{OT2}{wncyr}{m}{n}
\DeclareMathSymbol{\sha}{\mathalpha}{cyrletters}{"58}
 \newtheorem{thm}{Théorème}%[section]
 \newtheorem*{ques}{\textit{Question}}
 \newtheorem{lem}[thm]{Lemme}
 \theoremstyle{definition}
 \theoremstyle{definition}
 \theoremstyle{remark}
 \theoremstyle{remark}
 \newtheorem{rem}[thm]{Remarque}
 \theoremstyle{remark}
 \numberwithin{equation}{subsection}
 \newcommand{\To}{\longrightarrow}
 \renewcommand{\P}{\mathbb{P}}
 \newcommand{\Q}{\mathbb{Q}}
 \newcommand{\Z}{\mathbb{Z}}
 \newcommand{\K}{\mathcal{K}}
 \newcommand{\coker}{\textup{Coker}}
 \newcommand{\Br}{\textup{Br}}
 \newcommand{\CH}{\textup{CH}}
 \renewcommand{\H}{\textup{H}}
 \newcommand{\Hom}{\textup{Hom}}
 \newcommand{\Spec}{\textup{Spec}}
 \newcommand{\inv}{\textup{inv}}
 \newcommand{\E}{\textup{E}}
 \newcommand{\BM}{Brauer\textendash Manin }
 \renewcommand{\Spec}{\textup{Spec}}
\begin{document}

\title[]
{Approximation faible pour les 0-cycles sur un produit de variétés rationnellement connexes}

\author{ Yongqi LIANG  }

\address{Yongqi LIANG
\newline B\^atiment Sophie Germain,
\newline Universit\'e Paris Diderot - Paris 7,
\newline Institut de Math\'ematiques de Jussieu -  Paris Rive Gauche,
 \newline  75013 Paris,\newline
 France}

\email{yongqi.liang@imj-prg.fr}

\thanks{\textit{Mots clés} : zéro-cycles, approximation faible, obstruction de \BM}

\thanks{\textit{MSC 2010} : 11G35 (14G25, 14D10)}

\date{\today.}

%\dedicatory{}

%%% ----------------------------------------------------------------------

%%% ----------------------------------------------------------------------
\maketitle

\begin{abstract}
Considérons l'approximation faible de 0-cycles sur une variété propre lisse définie  sur un corps de nombres, elle est conjecturée d'être contrôlée par son groupe de Brauer.  Soit $X$ une surface de Châtelet ou une compactification lisse d'un espace homogène d'un groupe algébrique linéaire connexe à stabilisateur connexe. Soit $Y$ une variété rationnellement connexe. Nous montrons que 
l'approximation faible de 0-cycles sur le produit $X\times Y$ est contrôlée par son groupe de Brauer si c'est le cas pour  $Y$ après toute extension finie du corps de base. Nous ne supposons l'existence de 0-cycles  de degré $1$ ni sur $X$ ni sur $Y$.
\end{abstract}

\small
\footnotesize
\begin{center} \textbf{Weak approximation for 0-cycles on a product of rationally connected varieties}
\end{center}

\noindent\textit{Abstract.} Consider weak approximation for 0-cycles on a smooth proper variety defined over a number field, it is conjectured to be controlled by its Brauer group. Let $X$ be a Châtelet surface or a smooth compactification of a homogeneous space of a connected linear algebraic group with connected stabilizer. Let $Y$ be a rationally connected variety. We prove that weak approximation for 0-cycles on the product $X\times Y$ is controlled by its Brauer group if it is the case for  $Y$ after every finite extension of the base field. We do not suppose the existence of 0-cycles of degree $1$ neither on $X$ nor on $Y$.

\normalsize

%\tableofcontents
%%% ----------------------------------------------------------------------

\section{Introduction}
Soit $k$ un corps de nombres. On considère l'approximation faible pour les 0-cycles sur les $k$-variétés propres lisses et géométriquement connexes $V$. Il est conjecturé que l'obstruction de Brauer\textendash Manin est la seule obstruction à l'approximation faible pour les 0-cycles sur toutes telles variétés, \cite{CTSansuc81,KatoSaito86,CT95}. Grosso modo, on espère que la suite
$$\varprojlim_n\CH_0(V)/n\to\prod_{v\in\Omega_{k}}\varprojlim_n\CH'_0(V_{k_{v}})/n\to \Hom(\Br(V),\mathbb{Q}/\mathbb{Z})\leqno (\E)$$
soit exacte pour toute variété propre lisse, voir \S\ref{notation} pour les notations et voir \cite{Wittenberg} pour plus d'informations.
%% Ceci est un analogue de la question originale concernant les points rationnels,  il est aussi conjecturé que l'obstruction de Brauer\textendash Manin est la seule pour les variétés rationnelles, ou encore plus générale pour les variétés  rationnellement connexes, \cite{CTSansuc77-3}. 

Dans ce texte, nous nous restreignons au cas d'un produit de variétés $V=X\times Y$. Dans l'article profond de Skorobogatov et Zarhin \cite{SkZarhin}, ils ont démontré une relation entre les ensembles de Brauer\textendash Manin 
$$\left[\prod_{v}X\times Y(k_{v})\right]^{\Br}=\left[\prod_{v}X(k_{v})\right]^{\Br}\times\left[\prod_{v}Y(k_{v})\right]^{\Br}.$$ Par conséquent, si l'obstruction de Brauer\textendash Manin est la seule  à l'approximation faible pour les points rationnels sur $X$ et sur $Y$, alors il en va de même sur $X\times Y$. Avec très peu de modifications, l'argument de Skorobogatov\textendash Zarhin montre aussi la surjectivité de l'application naturelle 
$$\left[\prod_{v\in\Omega_{k}}\CH_{0}^{1}(X_{k_{v}}\times Y_{k_{v}})\right]^{\Br}\to\left[\prod_{v\in\Omega_{k}}\CH_{0}^{1}(X_{k_{v}})\right]^{\Br}\times\left[\prod_{v\in\Omega_{k}}\CH_{0}^{1}(Y_{k_{v}})\right]^{\Br},$$
où $[\prod\CH_{0}^{1}(V_{k_{v}})]^{\Br}$ désigne l'ensemble des familles de classes de 0-cycles locaux de degré $1$ qui sont orthogonales au groupe de Brauer de $V$. Par contre, dans le cadre de 0-cycles de degré $1$, ce n'est pas clair si cette application est une bijection.
Nous sommes intéressés par la question suivante concernant l'approximation faible de 0-cycles de degré quelconque.
\begin{ques}\label{mainquestion}
Soient $X$ et $Y$ des variétés propres lisses. Supposons que la suite $(\E)$ est exacte pour $X$ et $Y$, est-elle exacte pour $X\times Y$?
\end{ques}

Si $X$ est $k$-rationnelle, il est évident que la exactitude de la suite $(\E)$  pour $Y$ entraîne celle pour $X\times Y$.
Dans leur article récent \cite{HarWit}, comme un cas très particulier du résultat principal, Harpaz et Wittenberg ont obtenu le résultat suivant.

\begin{thm}[Harpaz\textendash Wittenberg]
Soit $X$ une courbe propre lisse sur $k$ telle que le groupe de Tate\textendash Shafarevich de sa jacobienne est fini.
Soit $Y$ une variété rationnellement connexe sur $k$.

Si la suite $(\E)$ est exacte pour $Y$, alors $(\E)$ est aussi exacte pour $X\times Y$.
\end{thm}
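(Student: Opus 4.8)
Le plan est d'appliquer la m\'ethode des fibrations pour les 0-cycles \`a la premi\`ere projection $p\colon X\times Y\to X$, que l'on regarde comme une fibration propre et lisse dont la fibre en un point ferm\'e $P$ s'identifie \`a $Y_{k(P)}$ --- ce cas rel\`eve du cadre g\'en\'eral des fibrations au-dessus d'une courbe \'etudi\'e dans \cite{HarWit}. On part d'une famille $(z_{v})_{v\in\Omega_{k}}$ de 0-cycles locaux sur $X\times Y$, de m\^eme degr\'e $\delta$, orthogonale \`a $\Br(X\times Y)$, d'un ensemble fini $S$ de places et d'un entier $n\geq 1$, et l'on cherche \`a produire un 0-cycle global $z$ de degr\'e $\delta$ sur $X\times Y$ arbitrairement proche de $z_{v}$ modulo $n$ pour $v\in S$ (il est standard que l'exactitude de $(\E)$ \'equivaut \`a la possibilit\'e d'une telle approximation).

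\emph{Descente vers la courbe.} On pousse la famille en avant par $p$: puisque $p^{*}\Br(X)\subseteq\Br(X\times Y)$, la formule de projection pour l'accouplement de Brauer\textendash Manin montre que $(p_{*}z_{v})_{v}$ est une famille de 0-cycles locaux de degr\'e $\delta$ sur $X$ orthogonale \`a $\Br(X)$ (on peut aussi invoquer la d\'ecomposition \`a la Skorobogatov\textendash Zarhin \cite{SkZarhin} de l'ensemble de Brauer\textendash Manin d'un produit). Comme $X$ est une courbe propre lisse dont la jacobienne a un groupe de Tate\textendash Shafarevich fini, l'exactitude de $(\E)$ pour $X$ est connue sous cette hypoth\`ese. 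Mais on a besoin d'une forme \emph{raffin\'ee} de cet \'enonc\'e, \'etablie dans \cite{HarWit}: on peut choisir le 0-cycle global $\theta$ sur $X$ non seulement proche de $(p_{*}z_{v})$ aux places de $S$, mais encore \`a support dans un sous-ensemble hilbertien de $X$ fix\'e \`a l'avance --- de sorte que les fibres de $p$ au-dessus des points de ce support soient lisses --- et compatible, aux places de $S$ et apr\`es un lemme de d\'eplacement, avec une \'ecriture des $z_{v}$ \`a support au-dessus de $\theta$.

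\emph{Remont\'ee dans les fibres.} Pour chaque point ferm\'e $P$ du support de $\theta$, de corps r\'esiduel $k(P)$, la fibre $p^{-1}(P)$ s'identifie \`a $Y_{k(P)}$, et les $z_{v}$ y induisent --- apr\`es d\'eplacement --- une famille de 0-cycles locaux index\'ee par les places de $k(P)$. En utilisant la compatibilit\'e des sp\'ecialisations de classes de Brauer et le fait que $Y$ est rationnellement connexe --- ce qui garantit la finitude de $\Br(Y_{\overline{k}})$, la trivialit\'e de $\Pic^{0}(Y_{\overline{k}})$, et donc le contr\^ole des groupes de Brauer g\'eom\'etriques le long de la fibration --- on v\'erifie que cette famille est orthogonale \`a $\Br(Y_{k(P)})$. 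L'exactitude de $(\E)$ pour $Y$, dont on a besoin ici sur les corps de nombres $k(P)$, fournit alors sur chaque $Y_{k(P)}$ un 0-cycle global proche des donn\'ees locales; en recollant ces 0-cycles via $p$ et en pond\'erant par les multiplicit\'es de $\theta$, on obtient le 0-cycle global cherch\'e sur $X\times Y$.

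La principale difficult\'e sera double. D'une part, il faudra orchestrer l'interaction entre l'approximation sur la base $X$ et celle sur les fibres $Y_{k(P)}$ de fa\c{c}on \`a ce que les conditions locales d'orthogonalit\'e aux groupes de Brauer se transmettent correctement entre $X\times Y$, $X$ et les $Y_{k(P)}$: c'est l\`a qu'intervient de mani\`ere essentielle la connexit\'e rationnelle de $Y$, via la finitude de $\Br(Y_{\overline{k}})$ et la suite exacte de type K\"unneth reliant $\Br(X\times Y)$ aux groupes de Brauer de $X$ et de $Y$. D'autre part, la version raffin\'ee de $(\E)$ pour la courbe $X$ --- contr\^olant simultan\'ement l'approximation aux places de $S$, la position du support dans un sous-ensemble hilbertien, et la compatibilit\'e avec les rel\`evements dans les fibres --- est le r\'esultat technique le plus co\^uteux, et c'est pr\'ecis\'ement dans sa preuve que sert la finitude du groupe de Tate\textendash Shafarevich de la jacobienne de $X$.
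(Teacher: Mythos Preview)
The paper does not actually prove this statement: it is quoted from \cite{HarWit} as a special case of their main theorem on fibrations, with no argument given in the paper itself. So there is no ``paper's own proof'' to compare your sketch against; what one can do is compare your outline with the strategy of \cite{HarWit} and with the internal logic of the present paper.

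Your sketch does capture the broad architecture of the fibration method behind \cite{HarWit}: push the Brauer--Manin orthogonal family down along $p\colon X\times Y\to X$, use a refined version of $(\E)$ on the curve $X$ (this is where finiteness of $\sha$ of the Jacobian is used) to produce a well-placed global $0$-cycle on $X$, and then lift into the fibres $Y_{k(P)}$.

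There is, however, a real gap that you acknowledge but do not close. In your lifting step you write that you need the exactness of $(\E)$ for $Y$ ``sur les corps de nombres $k(P)$'', i.e.\ for $Y_{k(P)}$ over the various residue fields of closed points of $X$. The hypothesis as stated only gives $(\E)$ for $Y$ over $k$. This discrepancy is not a triviality: the present paper explicitly remarks, just after stating its own Th\'eor\`eme~\ref{thmcase}, that ``ce n'est pas clair si on peut remplacer l'hypoth\`ese pos\'ee sur toute extension finie par une hypoth\`ese sur le corps de base''. In the fibration theorems of \cite{HarWit} the relevant hypothesis is in fact formulated fibre by fibre over closed points of the base curve, which for the constant fibration $X\times Y\to X$ amounts to assuming $(\E)$ for $Y_{K}$ over finite extensions $K/k$; the one-line statement quoted here is a mild abbreviation. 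Your sketch is therefore consistent with what \cite{HarWit} actually prove, but as written it does not match the hypothesis literally stated in the theorem; you should either strengthen the hypothesis to $(\E)$ for $Y_{K}$ for all finite $K/k$, or point precisely to the place in \cite{HarWit} where the passage from $k$ to $k(P)$ is handled.
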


Dans ce texte, nous obtenons l'énoncé suivant.
\begin{thm}\label{thmcase}
Soit $X$ une des $k$-variétés suivantes
\begin{itemize}
\item une surface de Châtelet,
\item une compactification lisse d'un espace homogène d'un groupe algébrique linéaire connexe à stabilisateur connexe,
\item une compactification lisse d'un espace homogène d'un groupe algébrique semi-simple simplement connexe à stabilisateur abélien.
\end{itemize}
Soit $Y$ une variété rationnellement connexe sur $k$.

Si la suite $(\E)$ est exacte pour $Y_{K}$ pour toute extension finie $K$ de $k$, alors $(\E)$ est  exacte pour $X\times Y$.
\end{thm}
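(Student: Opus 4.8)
Planning the proof approach for this theorem about weak approximation for 0-cycles on products\textbf{Strat�gie g�n�rale.} Le plan est de ramener la question pour $X\times Y$ � la m�me question pour $Y_{K}$ (sur les diverses extensions finies $K/k$) en exploitant la structure tr�s particuli�re des $k$-vari�t�s $X$ list�es. Pour ces $X$ on dispose d'une description fibr�e (morphisme vers $\P^{1}$ pour une surface de Ch�telet, fibration en espaces homog�nes obtenue � partir du quotient par un tore ou par la structure de l'espace homog�ne) et surtout d'un \emph{contr�le explicite du groupe de Brauer} et des th�or�mes d'approximation connus pour les points rationnels (Colliot-Th�l�ne\textendash Sansuc\textendash Swinnerton-Dyer pour Ch�telet, Borovoi et Colliot-Th�l�ne\textendash Xu pour les espaces homog�nes). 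La premi�re �tape est donc de rappeler, ou red�montrer sous la forme dont on a besoin, l'�nonc� pour les 0-cycles sur $X$ lui-m�me : la suite $(\E)$ est exacte pour chacune de ces $X$, et de fait on peut \emph{relever} une famille de 0-cycles locaux orthogonale � $\Br(X)$ en un 0-cycle global proche, et ce apr�s toute extension finie du corps de base.

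\textbf{�tape de d�vissage (le c�ur technique).} Partant d'une famille $(z_{v})_{v}$ de 0-cycles locaux sur $X\times Y$ de m�me degr�, orthogonale � $\Br(X\times Y)$, on projette sur $X$ pour obtenir une famille orthogonale � $\Br(X)$ (par fonctorialit�). Par le r�sultat sur $X$, on l'approche par un $0$-cycle global effectif $z_{X}=\sum_{i}m_{i}[P_{i}]$ sur $X$, o� les $P_{i}$ sont des points ferm�s de corps r�siduels $k(P_{i})=:K_{i}$ ; quitte � bouger $z_{X}$ dans sa classe et � utiliser un lemme de d�placement (type Colliot-Th�l�ne\textendash Swinnerton-Dyer / Wittenberg) on se ram�ne � des $P_{i}$ s�parables, en position g�n�rale, et � ce que les places de mauvaise approximation soient contr�l�es. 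La fibre de $X\times Y\to X$ au-dessus de $P_{i}$ �tant $Y_{K_{i}}$, il reste � produire sur chaque $Y_{K_{i}}$ un 0-cycle proche des projections ad�liques induites ; c'est ici qu'intervient l'hypoth�se que $(\E)$ est exacte pour $Y_{K}$ \emph{pour toute extension finie $K$}. Il faut v�rifier que la famille locale induite sur $Y_{K_{i}}$ (via les points de $X$ au-dessus des compl�tions, et un argument de restriction-corestriction entre $k_{v}$ et les $K_{i}\otimes_{k}k_{v}$) est bien orthogonale � $\Br(Y_{K_{i}})$ : cela r�sulte de la compatibilit� des accouplements de \BM � la corestriction et du fait que $\Br(X\times Y)$ contient, modulo $\Br(k)$, des classes provenant de $Y_{K_{i}}$ par ces correspondances. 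On recolle alors les 0-cycles globaux obtenus sur les fibres en un 0-cycle global sur $X\times Y$, en prenant garde au degr� (on peut ajuster par des 0-cycles de degr� $0$, ou par la construction d'un 0-cycle universel de degr� $1$ \emph{sur $X$} apr�s extension, l'hypoth�se du th�or�me ne supposant pas l'existence de tels 0-cycles).

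\textbf{O� se situe la difficult�.} L'obstacle principal n'est pas l'approximation sur $Y$ (donn�e par hypoth�se) ni l'approximation sur $X$ (connue), mais le \emph{contr�le du groupe de Brauer du produit} et le passage correct aux fibres : il faut montrer qu'une famille orthogonale � $\Br(X\times Y)$ induit, fibre par fibre, des familles orthogonales aux $\Br(Y_{K_{i}})$, ce qui repose sur une analyse de la suite spectrale de Leray pour $X\times Y\to X$ (ou, concr�tement, sur le fait que $\Br(X\times Y)/\Br(X)$ se d�crit en termes des $\Br(Y_{k(P)})$ le long des points ferm�s $P$ de $X$ dans un ouvert convenable) combin�e � un argument de sp�cialisation-corestriction pour les classes de Brauer. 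Le second point d�licat est de mener tous ces arguments \emph{sans 0-cycle de degr� $1$} : on travaille syst�matiquement avec des familles de m�me degr� arbitraire, et l'invariance de la situation par extension finie du corps de base (crucialement, l'hypoth�se est faite pour tous les $Y_{K}$) permet de contourner l'absence d'un point-base rationnel, en particulier lorsqu'on doit comparer degr�s locaux et globaux apr�s les extensions $K_{i}/k$.
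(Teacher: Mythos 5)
Your plan diverges from the paper's proof and, as written, has a genuine gap at the devissage step. You propose to push the local family forward to $X$, approximate it by a global $0$-cycle $z_{X}=\sum_{i}m_{i}[P_{i}]$ using exactness of $(\E)$ for $X$, and then work fibre by fibre over the closed points $P_{i}$ with residue fields $K_{i}$. Two things break. First, exactness of $(\E)$ for $X$ only produces a global cycle whose class agrees with the local ones in $\CH_0(X_{k_{v}})/n$; it gives no topological proximity of the support of $z_{X}$ to the supports of the local cycles, and without such proximity (plus smoothness and the implicit function theorem) you cannot manufacture, on each fibre $Y_{K_{i}}$, local points or cycles that remember the original adelic data, so the ``famille locale induite sur $Y_{K_{i}}$'' is not actually defined. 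Second, and more seriously, the orthogonality transfer you invoke requires that $\Br(X\times Y)/\Br(k)$ surject onto $\Br(Y_{K_{i}})/\Br(K_{i})$ by restriction to the fibre; for rationally connected varieties this surjectivity (the paper's Lemmes \ref{comparisonBrauer} et \ref{surjBr}) is only guaranteed when $K_{i}$ is linearly disjoint from a certain finite extension $L/k$, and you have no control over the residue fields $K_{i}$ coming out of a $0$-cycle approximation on $X$. Compatibility of the Brauer--Manin pairing with corestriction does not repair this: orthogonality to $\Br(X\times Y)$ does not descend to orthogonality to all of $\Br(Y_{K_{i}})$ for an arbitrary $K_{i}$. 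The degree bookkeeping and the gluing of the fibrewise cycles are also left vague, but those are secondary.

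Note that the paper's argument is built precisely to avoid these two obstacles, and it does not use the $0$-cycle statement on $X$ at all: the known input is weak approximation for \emph{rational points} on $X_{K}$ (Colliot-Thelene--Sansuc--Swinnerton-Dyer for Chatelet surfaces, Borovoi for homogeneous spaces). One multiplies by an auxiliary $\P^{1}$, uses the relative moving lemma and the Hilbert irreducibility theorem for $0$-cycles to replace the local $0$-cycles by local \emph{rational points} of $Z_{k(\lambda)}$ over a single closed point $\lambda\in\A^{1}$ whose residue field $k(\lambda)$ is chosen in $\K_{L}$, i.e. linearly disjoint from $L$; weak approximation for rational points on $X_{k(\lambda)}$ then yields one rational point $\theta$, whose fibre is a single variety $Y_{k(\theta)}$ with $k(\theta)=k(\lambda)$ still controlled, so the surjectivity onto $\Br(Y_{k(\theta)})/\Br(k(\theta))$ applies and the hypothesis on $Y_{K}$ concludes, with the degree adjusted by multiples of a fixed closed point. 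To rescue your plan you would need a form of $0$-cycle approximation on $X$ giving both support-level (not merely Chow-level) proximity and control of the residue fields of the approximating cycle --- which is essentially what the auxiliary $\P^{1}$ and Hilbert irreducibility provide in the paper.
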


Lorsque $Y$ admet un 0-cycle de degré $1$, ce théorème est une conséquence immédiate  d'un résultat récent du auteur   \cite[Thm. 2.5]{Liang-fibRC}. L'exactitude de $(\E)$ concerne l'approximation faible de 0-cycles de tout degré, elle est significative même sans l'existence de 0-cycles de degré $1$. Dans ce texte nous ne supposons pas cette existence.

Ce n'est pas clair si on peut remplacer l'hypothèse posée sur toute extension finie par une hypothèse sur le corps de base.

\section{Démonstration du théorème}\label{proof}

\subsection{Notations}\label{notation}

Dans ce texte, le corps de base $k$ est  un corps de nombres 
%sauf s'il est  indiqué autrement 
et les variétés concernées sont  supposées propres lisses et géométriquement connexes sur $k$. Soit $\Omega_{k}$ l'ensemble des places de $k$. Pour un sous-ensemble $S$ de $\Omega_{k}$ et une extension finie $K$ de $k$, la notation $S\otimes_{k}K$ désigne le sous-ensemble des places de $K$ qui se trouvent au-dessus	des places appartenant à $S$. Soit $M$ un groupe abélien, on note par $M_{n}$ et $M/n$ le noyau et le conoyau de la multiplication par $n\in\Z$. 

Soit $V$ une $k$-variété propre lisse, notons par $\CH_{0}(V)$ le groupe de Chow des 0-cycles sur $V$.  
Notons par $\Br (V)=\H^{2}_{\scriptsize{\textup{\'{e}t}}}(V,\mathbb{G}_{\textup{m}})$ le groupe de Brauer cohomologique de $V$. 
On peut définir un accouplement (dit de Brauer\textendash Manin) \cite{Manin,CT95} 
$$\prod_{v\in\Omega_{k}}\CH'_0(V_{k_{v}})\times\Br(V)\to\Q/\Z,$$
où  $\CH'_{0}(-)$ désigne le groupe de Chow modifié. Par définition, $\CH'_0(V_{k_{v}})$ est le groupe de Chow usuel si $v$ est une place non-archimédienne, et sinon $\CH'_0(V_{k_{v}})=\coker[N_{\bar{k}_{v}|k_{v}}:\CH_{0}(V_{\bar{k}_{v}})\to \CH_{0}(V_{k_{v}})]$. On en déduit la suite
$$\varprojlim_n\CH_0(V)/n\to\prod_{v\in\Omega_{k}}\varprojlim_n\CH'_0(V_{k_{v}})/n\to \Hom(\Br(V),\mathbb{Q}/\mathbb{Z}).\leqno (\E)$$
En supposant l'existence d'un 0-cycle de degré $1$, l'exactitude de $(\E)$ implique que l'obstruction de \BM est la seule au principe de Hasse et à l'approximation faible pour les 0-cycles de degré $\delta$ pour tout $\delta\in\Z$, \cite[Rem. 2.2.2]{Liang4}.
On renvoie à \cite{Wittenberg} et \cite{Liang4}  pour la définition de ces suites et les terminologies concernant l'approximation faible pour les 0-cycles.

\subsection{Démonstration du théorème}

Grâce à  \cite[Thm. 8.11]{chateletsurfaces} et \cite[Cor. 2.5]{Borovoi96}, le Théorème \ref{thmcase} résulte du théorème suivant.

\begin{thm}\label{mainthm}
Soient $X,Y$ des variétés géométriquement rationnellement connexes définies sur $k$. Pour une extension finie $L$ de $k$, notons par $\mathcal{K}_{L}$ l'ensemble des extensions finies $K$ de $k$ qui sont linéairement disjointes de $L$ sur $k$.

Pour toute $K\in\mathcal{K}_{L}$, supposons que l'obstruction de Brauer\textendash Manin est la seule obstruction à l'approximation faible pour les points rationnels sur $X_{K}$ et que $(\E)$ est exacte pour $Y_{K}$. 

Alors $(\E)$ est exacte pour $X\times Y$.
\end{thm}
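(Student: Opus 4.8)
The strategy is to reduce the exactness of $(\E)$ for $X\times Y$ to the two hypotheses by exploiting the projection $p\colon X\times Y\to X$ and fibering over $X$. Concretely, given a family of local 0-cycle classes $(z_{v})_{v}\in\prod_{v}\varprojlim_n\CH'_0((X\times Y)_{k_{v}})/n$ orthogonal to $\Br(X\times Y)$, I would first push forward along $p$ to obtain a family $(p_{*}z_{v})_{v}$ on $X$, orthogonal to $\Br(X)$. Since $X$ is rationally connected, the exactness of $(\E)$ for $X$ (which follows from the rational-point approximation hypothesis on $X_{K}$ via the known equivalence for rationally connected varieties — this is the role of \cite{chateletsurfaces}, \cite{Borovoi96} in deducing Theorem \ref{thmcase} but here it is an input) produces a global 0-cycle $z_{X}$ on $X$ approximating $(p_{*}z_{v})_{v}$. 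The point of the linear disjointness set $\mathcal{K}_{L}$ is that one may arrange $z_{X}$ to be supported on closed points whose residue fields lie in $\mathcal{K}_{L}$ for a suitable auxiliary $L$ controlling the splitting behaviour of everything in sight (e.g. a field over which the geometric Brauer group and the relevant torsors become constant).

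The second step is the fibration argument over the support of $z_{X}$. Writing $z_{X}=\sum_{i} n_{i}[P_{i}]$ with $P_{i}\in X$ closed points of residue field $K_{i}\in\mathcal{K}_{L}$, the fibers $Y_{K_{i}}=(\{P_{i}\}\times Y)$ are copies of $Y$ base-changed to $K_{i}$. I would transfer the local data $(z_{v})_{v}$ — which, after the approximation in step one, can be taken to lie on $\coprod_i Y_{K_i}$ over the places of $K_i$ above those of $k$ — into a family of local 0-cycle classes on each $Y_{K_i}$ orthogonal to $\Br(Y_{K_i})$; the compatibility of Brauer–Manin pairings under proper pushforward and corestriction is what makes this orthogonality survive. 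Then the hypothesis that $(\E)$ is exact for $Y_{K_i}$ (legitimate since $K_i\in\mathcal{K}_L$) yields global 0-cycles on each $Y_{K_i}$ approximating the local families; combining them gives a global 0-cycle on $X\times Y$ with the required local behaviour, i.e. exactness of $(\E)$ for $X\times Y$. One must also check the archimedean places separately because of the modified Chow group $\CH'_0$, but this is routine.

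The main obstacle I expect is the bookkeeping needed to make the two approximation steps \emph{simultaneously} compatible: the global 0-cycle $z_X$ chosen on $X$ in step one determines the fields $K_i$ over which one then runs the argument for $Y$, but one needs the local components of the original family on $X\times Y$ to actually restrict (up to the allowed $n$-divisible error and up to rational equivalence) to valid local inputs on the fibers $Y_{K_i}$ — this requires choosing $z_X$ not arbitrarily but with support meeting a well-chosen dense open on which $p$ behaves like a smooth fibration, and controlling denominators uniformly. A secondary subtlety is ensuring that orthogonality to $\Br(X\times Y)$, which a priori mixes classes coming from $X$, from $Y_{K_i}$, and genuinely "diagonal" classes, decomposes appropriately; here one invokes the Skorobogatov–Zarhin-type splitting of Brauer–Manin sets recalled in the introduction (after passing to the finite extensions $K_i$, where $Y$ acquires a rational point given by the fibration, so that the relative Brauer group of $Y_{K_i}$ over $K_i$ is well-behaved). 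The linear-disjointness hypothesis is precisely what guarantees that passing to the $K_i$ does not destroy the rational-point approximation property assumed for $X$ nor the exactness assumed for $Y$, since both are assumed over \emph{all} $K\in\mathcal{K}_L$.
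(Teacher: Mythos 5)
There is a genuine gap at the heart of your plan, in both of its steps. In step one you replace the hypothesis actually available for $X$ (Brauer--Manin is the only obstruction to weak approximation for \emph{rational points} on $X_K$) by exactness of $(\E)$ for $X$, and you then use it to produce a global $0$-cycle $z_X$ ``approximating'' $(p_*z_v)_v$. But exactness of $(\E)$ only yields agreement of classes in $\varprojlim_n\CH'_0(X_{k_v})/n$; it gives no topological ($v$-adic) proximity of actual cycles, and no control whatsoever on the residue fields of the support of $z_X$. Yet both are indispensable for your step two: to transfer the original local cycles on $X\times Y$ into the fibres $Y_{K_i}$ over the points of $z_X$ you need genuine $v$-adic closeness (so that an implicit-function-theorem argument can slide local points into the chosen fibres without changing their class in $\CH_0/n$ or their Brauer pairing), and to invoke the hypotheses over $K_i$ you need $K_i\in\K_L$. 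Neither is delivered by your step one, and your proposed fix (choosing $z_X$ with support in a dense open where $p$ is a nice fibration) does not address it: there is no moving/irreducibility mechanism over a higher-dimensional base $X$ that converts Chow-level data into a closed point with prescribed linear disjointness and with the local family sitting, up to controlled error, in the fibre above it. This missing mechanism is precisely what the paper supplies, and it is why the paper does \emph{not} fiber over $X$ at all: it introduces the auxiliary factor $\P^1$, sets $Z'=X\times Y\times\P^1$, and reduces (via Wittenberg's Proposition 3.1) to a statement $(P'_S)$ about actual cycles; then the relative moving lemma makes the pushforwards to $\P^1$ separable, the Hilbert irreducibility theorem for $0$-cycles produces a \emph{single} closed point $\lambda\in\A^1$ with $k(\lambda)\in\K_L$ close to these pushforwards, and the implicit function theorem moves the local points into the fibre $(X\times Y)_{k(\lambda)}$. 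Only then is the rational-point hypothesis on $X_{k(\lambda)}$ used, to find one rational point $\theta$ of $X_{k(\lambda)}$ approximating the projections $N_w$, after which exactness of $(\E)$ for the single fibre $Y_{k(\theta)}$ concludes.

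Two further points you leave untreated would also need care even if the transfer step were repaired. First, orthogonality on the fibre: the paper needs that $\Br(X\times Y)$ surjects onto $\Br(Y_{k(\theta)})$ modulo constants (its Lemma \ref{surjBr}), which uses the existence of the rational point $\theta$ and $k(\theta)\in\K_L$; your appeal to a Skorobogatov--Zarhin splitting plus ``$Y$ acquires a rational point over $K_i$'' is unfounded, since the fibre over $P_i$ is $Y_{K_i}$ and nothing gives it a $K_i$-point. Second, the degree bookkeeping in the absence of a $0$-cycle of degree $1$ (the congruences modulo $mn\delta_P$ and the correction by multiples of a fixed closed point $P$) is essential to recover the prescribed degree $\delta$, and your outline, which works fibre by fibre over possibly several points $P_i$ of varying residue degrees, gives no way to reassemble a global cycle of the right degree and the right class mod $n$.
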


La méthode de sa démonstration est basée sur la preuve du \cite[Thm. 2.5]{Liang-fibRC}, certains détails sont modifiés pour évider l'utilisation de l'existence d'un 0-cycle de degré $1$ sur $Y$: premièrement, dans notre cas la comparaison de groupes de Brauer de fibres est valable sans l'existence d'un 0-cycle de degré $1$ sur la fibre générique; deuxièmement, seulement d'éléments non-ramifiés de groupes de Brauer apparaissent dans la preuve et on n'a plus besoin alors du lemme formel de Harari.

On présente ici quelques lemmes connus qui seront appliqués pendant la preuve du théorème.

\begin{lem}[Lemme de déplacement relatif, {\cite[p. 89]{CT-SD}, \cite[p. 19]{CT-Sk-SD}}]\label{moving lemma}
Soit $\pi:X\to\mathbb{P}^1$ un morphisme dominant défini sur 
$\mathbb{R},$ $\mathbb{C}$ ou une extension finie de $\mathbb{Q}_p.$
Supposons que $X$ est lisse.

Alors pour tout 0-cycle $z'$ sur $X$, il existe un 0-cycle $z$ sur $X$ tel que $\pi_{*}(z)$ est séparable et tel que $z$ est suffisamment proche de $z'$.
\end{lem}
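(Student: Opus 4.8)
The plan is to treat this as a purely local statement and to prove it by a $v$-adic (resp. real- or complex-analytic) implicit function theorem combined with a transversality argument. Write $\kv$ for the ground field. Since the separability of $\pi_{*}(z)$ only involves the closed points occurring in its support, I may treat the positive and negative parts of $z'$ separately and so assume $z'=\sum_{i} n_{i}P_{i}$ is effective, the $P_{i}$ being distinct closed points of $X$ with residue fields finite separable extensions $L_{i}/\kv$. For each $i$ I fix the tautological $L_{i}$-point $\widetilde{P}_{i}\in X(L_{i})$ lying over $P_{i}$. It then suffices to produce, for each $i$, an effective $0$-cycle $z_{i}$ close to $n_{i}P_{i}$ with $\pi_{*}(z_{i})$ a sum of distinct closed points of multiplicity one, and to arrange that the supports of the various $\pi_{*}(z_{i})$ are pairwise disjoint; the cycle $z=\sum_{i}z_{i}$ then answers the question.

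First I would move the support into the locus where $\pi$ is smooth. As $X$ is smooth, $\pi$ is dominant, and the characteristic is $0$, generic smoothness shows that the critical locus $Z\subset X$ is a proper closed subset, so $U=X\setminus Z$ is dense open and $\pi|_{U}$ is smooth. The $\kv$-analytic space $X(\overline{\kv})$ is, near any point, an open polydisc of dimension $\dim X$, in which the analytic subset $Z(\overline{\kv})$ has empty interior; hence $\widetilde{P}_{i}$ can be perturbed within $X(L_{i})$, keeping its conjugates distinct so that the underlying closed point still has residue field $L_{i}$, until it lies in $U$. I may therefore assume $\pi$ is smooth at each $P_{i}$.

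At a point of $U$ the morphism $\pi$ is a submersion, so by the implicit function theorem it is, analytically near $\widetilde{P}_{i}$, a coordinate projection admitting local sections; in particular $\pi$ is $v$-adically open there and the images $\pi(\widetilde{P})$ vary freely as $\widetilde{P}$ ranges over a neighbourhood of $\widetilde{P}_{i}$ in $X(L_{i})$. I would use this to choose $n_{i}$ points $\widetilde{P}_{i,1},\dots,\widetilde{P}_{i,n_{i}}\in X(L_{i})$, all close to $\widetilde{P}_{i}$, subject to the following conditions, each of which fails only on a $v$-adically nowhere dense set and which can therefore be imposed simultaneously: the underlying closed points $P_{i,j}$ are distinct and still have residue field $L_{i}$; their images $Q_{i,j}=\pi(P_{i,j})$ also have residue field exactly $L_{i}$ (so that $[\kappa(P_{i,j}):\kappa(Q_{i,j})]=1$ and $\pi_{*}(P_{i,j})=Q_{i,j}$ has multiplicity one); and all the $Q_{i,j}$, over all $i$ and $j$, are pairwise distinct closed points of $\mathbb{P}^{1}$. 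Setting $z_{i}=\sum_{j}P_{i,j}$, continuity of the addition map on symmetric powers gives $z_{i}$ close to $n_{i}P_{i}$, and by construction $\pi_{*}(z)=\sum_{i,j}Q_{i,j}$ is a sum of pairwise distinct closed points of multiplicity one with separable residue fields, that is, a separable $0$-cycle.

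The main obstacle is the $v$-adic analytic bookkeeping of the last step: one must check, at a point where $\pi$ is smooth, that the competing requirements can all be met by arbitrarily small perturbations, the delicate part being the control of the residue fields $\kappa(P_{i,j})$ and $\kappa(Q_{i,j})$ so that no multiplicities survive in $\pi_{*}(z)$, together with the verification that each imposed condition is indeed nowhere dense. The case $\dim X=1$, where ``smooth'' means ``\'etale'' and the splitting must be carried out along the curve itself, and the need to stay uniformly close to $z'$ while separating finitely many images at once, are the points demanding the most care; over $\mathbb{R}$ and $\mathbb{C}$ the argument is identical with the real- and complex-analytic implicit function theorems.
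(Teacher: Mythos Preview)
The paper does not give a proof of this lemma; it is stated with a citation to \cite[p.~89]{CT-SD} and \cite[p.~19]{CT-Sk-SD} and used as a black box. Your argument---perturb each closed point into the smooth locus of $\pi$ using the analytic implicit function theorem, then use local analytic sections of $\pi$ to split multiplicities into distinct nearby points with distinct images in $\mathbb{P}^{1}$---is the standard proof and is essentially the one found in those references.

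A couple of minor remarks on the bookkeeping you flag. The fact that a small perturbation of $\widetilde{P}_{i}$ in $X(L_{i})$ still has residue field $L_{i}$ follows because, for each proper intermediate field $\kv\subset M\subsetneq L_{i}$, the inclusion $X(M)\hookrightarrow X(L_{i})$ has image a $\kv$-analytic submanifold of strictly smaller $\kv$-dimension, and there are only finitely many such $M$; the same reasoning in $\mathbb{P}^{1}(L_{i})$ gives the residue-field control on $Q_{i,j}$. For the reduction to the effective case, one should also record that the supports of $\pi_{*}(z^{+})$ and $\pi_{*}(z^{-})$ can be taken disjoint, so that no cancellation reintroduces multiplicities; you impose this implicitly when requiring all the $Q_{i,j}$ to be pairwise distinct, but it is worth stating for the signed cycle $z=z^{+}-z^{-}$ as well. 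With these points made explicit your proof is complete.
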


\begin{lem}[{\cite[Prop. 3.1.1]{Liang4}}]\label{comparisonBrauer}
Soit $V$ une variété propre lisse et géométriquement rationnellement connexe définie sur $k.$
Alors il existe une extension finie $l$ de $k$ telle que,
pour toute $K\in\mathcal{K}_{l}$, l'homomorphisme induit par restriction 
$$\Br (V)/\Br (k)\to \Br (V_K)/\Br (K)$$
est un isomorphisme de groupes finis.
\end{lem}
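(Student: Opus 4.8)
The plan is to study $\Br(V)/\Br(k)$ through the Hochschild--Serre spectral sequence $\H^{p}(k,\H^{q}(V_{\bar{k}},\mathbb{G}_{\mathrm{m}}))\Rightarrow\H^{p+q}(V,\mathbb{G}_{\mathrm{m}})$ and to compare it over $k$ and over $K$. Two geometric inputs drive everything. Since $V$ is geometrically rationally connected in characteristic $0$, it is geometrically simply connected, so $\H^{1}(V_{\bar{k}},\mathcal{O})=0$ (whence $\Pic(V_{\bar{k}})=\mathrm{NS}(V_{\bar{k}})$) and $\H^{2}(V_{\bar{k}},\Z)$ is torsion-free; thus $\Pic(V_{\bar{k}})$ is a finitely generated \emph{torsion-free} $\Gamma_{k}$-module, where $\Gamma_{F}:=\Gal(\bar{k}/F)$. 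Moreover $\H^{2}(V_{\bar{k}},\mathcal{O})=0$ kills the divisible part, so $\Br(V_{\bar{k}})$ is \emph{finite}. Using the vanishing $\H^{3}(k,\mathbb{G}_{\mathrm{m}})=0$ for number fields, the spectral sequence yields two clean statements: writing $\Br_{1}(V)=\ker(\Br(V)\to\Br(V_{\bar{k}}))$, the algebraic part satisfies $\Br_{1}(V)/\Br(k)\cong\H^{1}(k,\Pic(V_{\bar{k}}))$, and the transcendental part is cut out by a single differential,
$$\Br(V)/\Br_{1}(V)=\ker\big(d_{2}\colon\Br(V_{\bar{k}})^{\Gamma_{k}}\to\H^{2}(k,\Pic(V_{\bar{k}}))\big).$$
Finiteness of $\Br(V)/\Br(k)$ is then immediate, $\H^{1}(k,\Pic(V_{\bar{k}}))$ being finite and $\Br(V_{\bar{k}})$ finite; note also $\Br(k)$ maps into $\Br_{1}(V)$.

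Next I would fix the field $l$. Let $L_{1},L_{2}$ be the splitting fields of the modules $\Pic(V_{\bar{k}})$ and $\Br(V_{\bar{k}})$. The finite subgroup $A:=d_{2}(\Br(V_{\bar{k}})^{\Gamma_{k}})\subseteq\H^{2}(k,\Pic(V_{\bar{k}}))$ consists of torsion classes, each split by a finite extension, so I may choose $l\supseteq L_{1}L_{2}$ finite Galois over $k$ with $\mathrm{res}\colon A\to\H^{2}(l,\Pic(V_{\bar{k}}))$ zero. Because $\Pic(V_{\bar{k}})$ is torsion-free and $l$ splits it, $\H^{1}(\Gamma_{l},\Pic(V_{\bar{k}}))=\Hom(\Gamma_{l},\Pic(V_{\bar{k}}))=0$, so the inflation--restriction analysis for $\Gamma_{l}\triangleleft\Gamma_{k}$ collapses and gives $\ker(\mathrm{res}^{l})=\mathrm{inf}\,\H^{2}(\Gal(l/k),\Pic(V_{\bar{k}}))$ with inflation injective. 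In particular $A$ lies in this inflation image.

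For $K\in\K_{l}$ I would run three comparisons, all powered by the fact that linear disjointness gives a group isomorphism $\Gal(Kl/K)\cong\Gal(l/k)$ (and likewise for $L_{1},L_{2}\subseteq l$), under which restriction in group cohomology is an isomorphism. First, $\Br(V_{\bar{k}})^{\Gamma_{K}}=\Br(V_{\bar{k}})^{\Gamma_{k}}$, since $\Gamma_{K}$ acts through $\Gal(KL_{2}/K)\cong\Gal(L_{2}/k)$. Second, $\H^{1}(k,\Pic(V_{\bar{k}}))\xrightarrow{\sim}\H^{1}(K,\Pic(V_{\bar{k}}))$ by inflation from the isomorphic Galois groups (torsion-freeness killing $\H^{1}(\Gamma_{l},-)$), which settles the algebraic part. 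Third, and this is the crux, I must show $d_{2}$ has the same kernel over $K$ as over $k$; by functoriality this reduces to injectivity of $\mathrm{res}_{K}\colon\H^{2}(k,\Pic(V_{\bar{k}}))\to\H^{2}(K,\Pic(V_{\bar{k}}))$ \emph{on $A$}. Writing $a=\mathrm{inf}_{k}(\bar{a})$ with $\bar{a}\in\H^{2}(\Gal(l/k),\Pic(V_{\bar{k}}))$, the commuting square $\mathrm{res}_{K}\circ\mathrm{inf}_{k}=\mathrm{inf}_{K}\circ\mathrm{res}$, the isomorphism $\mathrm{res}\colon\H^{2}(\Gal(l/k),-)\xrightarrow{\sim}\H^{2}(\Gal(Kl/K),-)$, and the injectivity of $\mathrm{inf}_{K}$ force $\mathrm{res}_{K}(a)=0\Rightarrow\bar{a}=0\Rightarrow a=0$. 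Hence $\mathrm{res}_{K}$ is injective on $A$, so $\ker d_{2}^{(k)}=\ker d_{2}^{(K)}$ and the transcendental parts match.

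Finally I would assemble the pieces in the commutative diagram with exact rows $0\to\Br_{1}(V)/\Br(k)\to\Br(V)/\Br(k)\to\Br(V)/\Br_{1}(V)\to0$ and its analogue over $K$; the two outer restriction maps being isomorphisms, the five lemma gives that $\Br(V)/\Br(k)\to\Br(V_{K})/\Br(K)$ is an isomorphism of finite groups. The main obstacle is precisely the third comparison, controlling the differential $d_{2}$ under base change: one cannot hope for $\mathrm{res}_{K}$ to be injective on all of $\H^{2}(k,\Pic(V_{\bar{k}}))$, and coprimality of $[K:k]$ with the exponent of $A$ is unavailable since $K$ ranges over the whole family $\K_{l}$. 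The argument goes through only because the choice of $l$ has forced $A$ to lie in the inflation image from $\Gal(l/k)$, where linear disjointness turns restriction into an isomorphism.
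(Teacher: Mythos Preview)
The paper does not give its own proof of this lemma; it simply quotes it as \cite[Prop.~3.1.1]{Liang4}. Your argument is correct and follows what is essentially the standard route via the Hochschild--Serre spectral sequence, so there is nothing to compare against in the present paper itself.

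A couple of remarks on the write-up. Your treatment of the algebraic part $\Br_{1}(V)/\Br(k)\cong\H^{1}(k,\Pic(V_{\bar{k}}))$ is the classical one, and the invariance under $K\in\mathcal{K}_{l}$ is exactly the inflation argument you give, using that $\Pic(V_{\bar{k}})$ is torsion-free so that $\H^{1}(\Gamma_{l},\Pic(V_{\bar{k}}))=0$. The point that genuinely requires care is the transcendental part, and your idea---enlarge $l$ so that the finite subgroup $A=d_{2}(\Br(V_{\bar{k}})^{\Gamma_{k}})$ dies under restriction to $l$, hence lies in the inflation image of $\H^{2}(\Gal(l/k),\Pic(V_{\bar{k}}))$, where linear disjointness turns restriction to $K$ into an isomorphism---is exactly the right manoeuvre. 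This avoids any appeal to coprimality of $[K:k]$ with the exponent of $A$, which indeed would not be available uniformly over $\mathcal{K}_{l}$.

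Two small points you may want to tighten when writing this out formally: first, ``$\Br(k)$'' in $\Br_{1}(V)/\Br(k)$ should be read as the image $\Br_{0}(V)$ of $\Br(k)$ in $\Br(V)$, since $V$ is not assumed to have a rational point; second, the identification $\Br(V)/\Br_{1}(V)=\ker(d_{2})$ uses not only $\H^{3}(k,\mathbb{G}_{\mathrm{m}})=0$ (to kill $E_{2}^{3,0}$ and hence $d_{3}^{0,2}$) but also that the edge map $\Br(V)\to E_{\infty}^{0,2}$ is surjective with kernel $\Br_{1}(V)$, which is immediate from the definition of $\Br_{1}$. Both are implicit in what you wrote.
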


\begin{lem}\label{surjBr}
Soient $X,Y$ des variétés géométriquement rationnellement connexes.  Alors il existe une extension finie $l$ de $k$ telle que, pour toute $K\in\mathcal{K}_{l}$, la composition des  homomorphismes suivants est surjective
$$\frac{\Br (X\times Y)}{\Br (k)}\to \frac{\Br (X_{K}\times Y_{K})}{\Br (K)}\buildrel{\theta^{*}}\over\To \frac{\Br (Y_{K})}{\Br (K)},$$
où le deuxième homomorphisme est la restriction à la fibre de $\pi_{1}:X\times Y\to X$ en un $K$-point rationnel arbitraire $\theta$ de $X_{K}$. 
\end{lem}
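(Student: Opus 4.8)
The plan is to reduce the surjectivity of the composite to the two known facts available, namely Lemma \ref{comparisonBrauer} applied to $X\times Y$ and the compatibility of Brauer groups with base change along a rational point. First I would apply Lemma \ref{comparisonBrauer} to the variety $V = X\times Y$ (which is geometrically rationally connected, being a product of two such), obtaining a finite extension $l_1$ of $k$ such that for every $K\in\mathcal{K}_{l_1}$ the restriction $\Br(X\times Y)/\Br(k)\to\Br(X_K\times Y_K)/\Br(K)$ is an isomorphism. This already handles the first arrow in the composite. It then suffices to show that the second arrow $\theta^*\colon\Br(X_K\times Y_K)/\Br(K)\to\Br(Y_K)/\Br(K)$ is surjective for a suitable choice of $l$ refining $l_1$.

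For the second arrow, the key observation is that the fibre of $\pi_1\colon X\times Y\to X$ over any $K$-point $\theta\in X_K$ is canonically identified with $Y_K$, and the inclusion of this fibre composed with $\pi_2$ (projection to $Y$) is the identity on $Y_K$; more precisely, $\theta$ induces a section $s_\theta\colon Y_K\to X_K\times Y_K$ of $\pi_2$, and $\theta^*$ is precisely $s_\theta^*$. Hence $\pi_2^*\colon\Br(Y_K)\to\Br(X_K\times Y_K)$ is a section of $\theta^*$ at the level of Brauer groups, so $\theta^*$ is surjective, and it descends to a surjection on the quotients by $\Br(K)$ because $\pi_2^*$ and $\theta^*$ are both compatible with the structure maps to $\Spec K$. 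This argument in fact works for every finite extension $K$ and every $K$-point $\theta$, so no further shrinking of the field is needed for the second arrow: one may take $l = l_1$.

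The only genuine subtlety is to make sure the two arrows are compatible, i.e.\ that the composite really is surjective and not merely each arrow separately; but since the first is an isomorphism and the second a (split) surjection for the relevant $K$, the composite is a surjection. I would therefore set $l = l_1$ (the extension produced by Lemma \ref{comparisonBrauer} for $X\times Y$, which depends on $L$ through the disjointness condition but can be enlarged to contain $L$ if one wishes a single statement), and the claim follows. I do not expect a real obstacle here: the statement is essentially a formal consequence of Lemma \ref{comparisonBrauer} together with the existence of the section $s_\theta$; the one point to check carefully is that $X_K$ admits such a $K$-point $\theta$ at all — but this is part of the hypothesis in the way the lemma is invoked later (one chooses $\theta$ after arranging that $X_K$ has a rational point), or one simply states the lemma conditionally on the existence of $\theta$, as written.
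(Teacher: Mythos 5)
Votre démonstration est correcte et suit essentiellement la même voie que celle de l'article : l'isomorphisme du premier homomorphisme provient du Lemme \ref{comparisonBrauer} appliqué à $V=X\times Y$, et la surjectivité du second provient de la section $s_\theta$ de $\pi_{2K}$ induite par $\theta$, de sorte que $\theta^{*}\circ\pi_{2K}^{*}=\mathrm{id}$ sur $\Br(Y_K)/\Br(K)$. Votre remarque finale sur l'existence de $\theta$ correspond exactement à la Remarque qui suit le lemme dans l'article : l'énoncé est conditionnel à l'existence d'un tel point rationnel.
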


\begin{rem}
Le lemme ne dit rien sur l'existence de $K$-points rationnels sur $X_{K}$.
\end{rem}

\begin{proof}
Une fois qu'il existe un $K$-point rationnel $\theta$ de $X_{K}$, la projection $\pi_{2K}:X_{K}\times Y_{K}\to Y_{K}$ admet une section induite par $\theta$. D'où le deuxième homomorphisme est surjectif. On choisit $l$ comme dans Lemme \ref{comparisonBrauer} avec $V=X\times Y$, le premier  homomorphisme est un isomorphisme.
\end{proof}

\begin{lem}[Théorème d'irréductibilité de Hilbert pour les 0-cycles]\label{Hilbert_irred}
Soit $S$ un ensemble fini non-vide de places de $k$. Soit $L$ une extension finie de $k$.
Pour toute $v\in S$, soit $z_{v}$ un 0-cycle séparable  effectif de degré $d>0$ de support contenu dans $\mathbb{A}^{1}=\P^{1}\setminus\infty$.

Alors il existe un point fermé $\lambda$ de $\mathbb{A}^{1}$ tel que
\begin{itemize}
\item[-] $k(\lambda)\in\K_{L}$;
\item[-] vu comme un 0-cycle, $\lambda$ est suffisamment proche de  $z_{v}$ pour toute $v\in S$.
\end{itemize}
\end{lem}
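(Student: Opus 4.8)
The plan is to reduce the statement to Hilbert's irreducibility theorem for monic polynomials in one variable, combined with weak approximation at the places of $S$. First I would identify the space of monic polynomials of degree $d$ in $T$ with $\mathbb{A}^d_k$ via their coefficients: a closed point $\lambda$ of $\mathbb{A}^1=\P^1\setminus\infty$ with $[k(\lambda):k]=d$ is then the same datum as a monic polynomial $g\in k[T]$ of degree $d$ that is irreducible over $k$, with $k(\lambda)=k[T]/(g)$, while the effective separable $0$-cycle $z_v$ of degree $d$ corresponds to a monic squarefree polynomial $f_v\in k_v[T]$ of degree $d$ with zero scheme $z_v$. Since $f_v$ is squarefree, Krasner's lemma shows that any monic $g\in k[T]$ of degree $d$ whose coefficients are $v$-adically close enough to those of $f_v$ has $V(g)_{k_v}$ arbitrarily $v$-adically close, as a $0$-cycle, to $z_v$: the factorization type over $k_v$ stays constant and the roots vary continuously. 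It therefore suffices to produce a monic $g\in k[T]$ of degree $d$, irreducible over $k$, with $k[T]/(g)\in\K_L$, and with coefficients $v$-adically close to those of $f_v$ for every $v\in S$.

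Next I would consider the generic monic polynomial $G(T)=T^d+a_{d-1}T^{d-1}+\dots+a_0\in k(\underline a)[T]$ over $\mathbb{A}^d_k=\Spec k[\underline a]$. It is irreducible over $\bar k(\underline a)$ — its splitting field is purely transcendental over $\bar k$ with Galois group $S_d$ — hence a fortiori over $L(\underline a)$. Applying Hilbert's irreducibility theorem to the Galois closure over $k(\underline a)$ of the field $L(\underline a)[T]/(G)$ (equivalently, to the compositum of the splitting field of $G$ with $\widetilde L(\underline a)$, where $\widetilde L$ is the Galois closure of $L/k$) yields a thin subset $E\subset\mathbb{A}^d(k)$ such that, for every $\underline a\in\mathbb{A}^d(k)\setminus E$, the specialized polynomial $g_{\underline a}$ (monic of degree $d$) is irreducible over $k$ and remains irreducible over $L$; equivalently $k[T]/(g_{\underline a})$ is linearly disjoint from $L$ over $k$, i.e. $k[T]/(g_{\underline a})\in\K_L$.

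It remains to combine this with approximation at $S$. For each $v\in S$, the point of $\mathbb{A}^d(k_v)$ with coordinates the coefficients of $f_v$ lies in the Zariski-open — hence $v$-adically open and dense — locus of squarefree polynomials; let $U_v\subset\mathbb{A}^d(k_v)$ be a $v$-adic neighbourhood of it small enough that every monic $g$ of degree $d$ with coefficients in $U_v$ approximates $z_v$ to the desired precision. By the classical refinement of Hilbert's irreducibility theorem over a number field — a thin subset of $\mathbb{A}^d(k)$ cannot contain a product of nonempty $v$-adic open sets taken over a finite set of places (see, e.g., \cite{CT-Sk-SD} and the references there) — there is $\underline a\in(\mathbb{A}^d(k)\setminus E)\cap\prod_{v\in S}U_v$. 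Then $g:=g_{\underline a}$ and $\lambda:=V(g)\subset\mathbb{A}^1$ satisfy all the required properties.

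I expect the only genuine obstacle to be the input from Hilbert's irreducibility theorem: its combination with weak approximation at $S$, and the fact that one can simultaneously prescribe linear disjointness from $L$ — the latter being handled above by passing to the Galois closure of $L(\underline a)[T]/(G)$. The reduction to monic polynomials in one variable and the continuity of roots are routine.
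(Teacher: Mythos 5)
Your proof is correct, but it follows a genuinely different route from the paper, whose ``proof'' is a one-line reduction: the lemma is obtained as a special case of \cite[Lem. 3.4]{Liang1}, applied to the generalized Hilbert subset of $\P^{1}$ defined by the morphism $\P^{1}_{L}\to\P^{1}$ (a closed point $\lambda$ in this Hilbert subset has connected fibre under $\P^{1}_{L}\to\P^{1}$, i.e. $k(\lambda)\otimes_{k}L$ is a field, which is exactly the condition $k(\lambda)\in\K_{L}$). You instead reprove the underlying fact from scratch: you parametrize effective degree-$d$ cycles on $\A^{1}$ by coefficients of monic polynomials (i.e. by $\A^{d}\simeq\mathrm{Sym}^{d}\A^{1}$), use Krasner's lemma/continuity of roots to translate ``$\lambda$ suffisamment proche de $z_{v}$'' into a $v$-adic open condition on the coefficients (at archimedean $v$ the same conclusion holds by continuity of roots and local constancy of the factorization type on the separable locus), and then combine Hilbert irreducibility for the generic monic polynomial with the classical fact that a thin subset of $\A^{d}(k)$ cannot contain a product of nonempty $v$-adic opens over a finite set of places; the linear disjointness from $L$ is correctly forced by requiring irreducibility over $L$, handled via the Galois closure of the compositum with $\widetilde{L}(\underline{a})$ --- note that irreducibility of $g_{\underline a}$ over $L$ gives that $k(\lambda)\otimes_{k}L$ is a field, which is the (embedding-independent) form of disjointness actually needed later. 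What your approach buys is self-containedness: it makes the lemma independent of the formalism of generalized Hilbert subsets of \cite{Liang1}, at the price of redoing, in the special case of $\P^{1}$, essentially the same symmetric-power-plus-HIT-with-weak-approximation argument that underlies the cited lemma; the paper's citation buys brevity and consistency with the machinery used throughout that series of papers.
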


\begin{proof}
C'est un cas particulier du \cite[Lem. 3.4]{Liang1} en prenant le sous-ensemble hilbertien généralisé de $\P^{1}$ defini par $\P^{1}_{L}\to\P^{1}$.
\end{proof}

\begin{proof}[Démonstration du Théorème \ref{mainthm}]
Quitte à augmenter $L$, on peut supposer que l'extension finie $L$ de $k$ contient 
\begin{itemize}
\item[-] une extension $l$ obtenue dans Lemme \ref{surjBr}, 
\item[-] et une extension $l$ obtenue dans Lemme \ref{comparisonBrauer} appliqué avec $V=X$.
\end{itemize}

Dans la preuve, on utilisera les notations suivantes $Z=X\times Y$, $Z'=Z\times\P^{1}$, $X'=X\times\P^{1}$, et $\Pi=\pi_{1}\times id_{\P^{1}}:Z'\to X'$. Afin de démontrer l'exactitude de $(\E)$ pour $Z$, il suffit de la démontrer pour $Z'$ car leurs groupes de Brauer et leurs groupes de Chow des 0-cycles sont naturellement isomorphes respectivement. Pour les variétés géométriquement rationnellement connexes, un argument de Wittenberg montre que l'exactitude de $(\E)$ pour $Z'$, fibrée comme $Z'\to\P^{1}$, est une conséquence de l'énoncé suivant concernant tout ensemble fini de places $S\subset\Omega_{k}$, voir \cite[Prop. 3.1]{Wittenberg} et voir aussi le début de la preuve du \cite[Thm. 2.5]{Liang-fibRC}.
\begin{itemize}
\item[$(P'_{S})$]  Soit $\{z_{v}\}_{v\in\Omega_{k}}$ une famille de 0-cycles locaux de degré $\delta$ sur $Z'$. Si elle est orthogonale à $\Br (Z')$, alors pour tout entier $n>0$, il existe un 0-cycle global $z$ de  $Z'$ de degré $\delta$ tel que, pour toute $v\in S$, on a $z=z_{v}$ dans $\CH_{0}(Z'_{v})/n$.
\end{itemize}

Montrons cette propriété pour tout ensemble fini $S\subset\Omega_{k}$.

Comme $X$ est géométriquement rationnellement connexe, le quotient $\Br (X)/\Br (k)$ est fini. On fixe un ensemble fini de représentants $\Gamma_{X}\subset\Br (X)$, on identifie $\Br (X)$ à $\Br (X')$ et on voit $\Gamma_{X}$ comme un sous-ensemble de $\Br (X')$. On fixe aussi un ensemble fini de représentants $\Gamma_{Z}\subset\Br (Z)=\Br (Z')$ de $\Br (Z)/\Br (k)$. Grâce à l'argument de bonne réduction, on peut aussi supposer que $S$ est suffisamment grand tel que l'évaluation locale de tout élément de $\Gamma_{X}$ et de $\Gamma_{Z}$ en tout 0-cycle vaut $0$ pour $v\notin S$. Comme $Y$ et $Z$ sont géométriquement intègres et lisses, quitte à augmenter $S$ on peut aussi supposer que $Y(K)\neq\emptyset$ et $Z(K)\neq\emptyset$ pour toute extension finie $K$ de $k_{v}$ et pour toute $v\notin S$ d'après l'estimation de Lang\textendash Weil et le lemme de Hensel. 

Soit $\{z_{v}\}_{v\in\Omega_{k}}$ une famille de 0-cycles locaux sur $Z'$, de degré $\delta$, et orthogonale à $\Br (Z')$.
Pour tout  $b_{X}\in\Gamma_{X}\subset\Br (X')$, on obtient 
$$\sum_{v\in\Omega_{k}}\inv_{v}(\langle z_{v},\Pi^{*}(b_{X})\rangle_{v})=0,$$
alors
$$\sum_{v\in S}\inv_{v}(\langle z_{v},\Pi^{*}(b_{X})\rangle_{v})=0.$$
De même, $$\sum_{v\in S}\inv_{v}(\langle z_{v},b_{Z}\rangle_{v})=0$$
pour tout $b_{Z}\in\Gamma_{Z}$.

Soit $m$ un entier positif qui annihile tout élément de $\Gamma_{X}$ et de $\Gamma_{Z}$. Fixons un point fermé $P$ de $Z'$, notons par $\delta_{P}=[k(P):k]$ le degré de $P$.

Pour toute $v\in S$, on écrit $z_{v}=z_{v}^{+}-z_{v}^{-}$ où $z_{v}^{+}$ et $z_{v}^{-}$ sont 0-cycles effectifs de supports disjoints.
On pose $z_{v}^{1}=z_{v}+mn\delta_{P}z_{v}^{-}=z_{v}^{+}+(mn\delta_{P}-1)z_{v}^{-}$, alors $\deg(z_{v}^{1})\equiv\delta\mod{mn\delta_{P}}$ et ils sont tous effectifs. On ajoute à chaque $z_{v}^{1}$ un multiple convenable de 0-cycle $mnP_{v}$ où $P_{v}=P\times_{\Spec(k)}\Spec(k_{v})$ et on obtient $z_{v}^{2}$ du même degré $\Delta$, avec $\Delta\equiv\delta\mod{mn\delta_{P}}$ pour toute $v\in S$. Vu la choix de $m$, on trouve $\langle z_{v}^{2}, \Pi^{*}(b_{X})\rangle_{v}=\langle z_{v}, \Pi^{*}(b_{X})\rangle_{v}$ pour toute $v\in S$ et tout $b\in\Gamma_X$. On applique le Lemme \ref{moving lemma} pour $pr_{\P^{1}}\circ\Pi: Z'\to\P^{1}$ et on obtient un 0-cycle effectif $z_{v}^{3}$ suffisamment proche de $z_{v}^{2}$ tel que $(pr_{\P^{1}}\circ \Pi)_{*}(z_{v}^{3})$ est séparable. A fortiori, $\Pi_{*}(z_{v}^{3})$ sont des 0-cycles séparables effectifs sur $X'$. D'après la continuité de l'évaluation locale, on trouve $\langle z_{v}^{3},\Pi^{*}(b)\rangle_{v}=\langle z_{v},\Pi^{*}(b)\rangle_{v}$. On vérifie que $z_{v}$, $z_{v}^{1}$, $z_{v}^{2}$, et $z_{v}^{3}$ ont la même image dans $\CH_{0}(Z'_{v})/n$.

On choisit un point rationnel $\infty\in\P^{1}(k)$ en dehors des supports de $(pr_{\P^{1}}\circ \Pi)_{*}(z^{3}_{v})$ pour toute $v\in S$. 
D'après le théorème d'irréductibilité de Hilbert pour les 0-cycles (Lemme \ref{Hilbert_irred}) appliqué à $pr_{\P^{1}}\circ \Pi:Z'\to\P^{1}$, on trouve un point fermé $\lambda\in\mathbb{A}^{1}$ tel que $k(\lambda)\in\K_{L}$ et $\lambda$ est suffisamment proche de $(pr_{\P^{1}}\circ \Pi)_{*}(z_{v}^{3})$. Plus précisément, on écrit
$\lambda_v=\lambda\times_{\mathbb{P}^1}{\mathbb{P}_{k_{v}}^1}=\bigsqcup_{w\mid v,w\in\Omega_{k(\lambda)}}\Spec(k(\lambda)_w)$
pour $v\in\Omega_k,$ l'image de $\lambda$ dans $Z_0(\mathbb{P}^1_{k_{v}})$ s'écrit comme
$\lambda_v=\sum_{w\mid v,w\in\Omega_{k(\lambda)}}P_w$ où $P_w=\Spec(k(\lambda)_w)$ est un point fermé de ${\mathbb{P}^1_v}$ de corps résiduel $k(\lambda)_w.$
Pour toute $v\in S,$ le 0-cycle $\lambda_v$ est suffisamment proche de $(pr_{\P^{1}}\circ \Pi)_*(z^3_v),$ où
le 0-cycle séparable effectif $(pr_{\P^{1}}\circ \Pi)_*(z^3_v)$ s'écrit comme $\sum_{w\mid v,w\in\Omega_{k(\lambda)}}Q_w$ avec $Q_w$ distincts deux à deux.
Alors $k(\lambda)_w=k_v(P_w)=k_v(Q_w),$  et $P_w$ est suffisamment proche de $Q_w\in \mathbb{P}^1_v(k(\lambda)_w).$
On sait que
$z^3_v$ s'écrit comme $\sum_{w\mid v,w\in\Omega_{k(\lambda)}}M^0_w$ with $k_v(M^0_w)=k(\lambda)_w$ et
$M^0_w\in Z'_v(k(\lambda)_w)$ se trouve sur la fibre de $pr_{\P^{1}}\circ \Pi$ en point fermé $Q_w.$
Le théorème des fonctions implicites implique qu'il existe un $k(\lambda)_w$-point lisse  $M_w$ sur la fibre $(pr_{\P^{1}}\circ \Pi)^{-1}(\lambda)$ suffisamment proche de $M^0_w$ pour toute $w\in S\otimes_kk(\lambda).$ Alors les points fermés $M_w$ et $M_w^0$ ont la même image dans $CH_0(X_v)/n$.

D'après la continuité de l'évaluation locale, pour tout $b_{X}\in \Gamma_{X}$ on obtient l'égalité sur $X'$ 
$$\sum_{w\in S\otimes_kk(\lambda)}\inv_w( \langle \Pi(M_w),b_{X} \rangle_{k(\lambda)_w})=0.$$
De même, sur $Z'$, pour tout $b_{Z}\in\Gamma_{Z}$
$$\sum_{w\in S\otimes_kk(\lambda)}\inv_w( \langle M_w,b_{Z} \rangle_{k(\lambda)_w})=0.$$

Sur la $k(\lambda)$-variété $(pr_{\P^{1}}\circ\Pi)^{-1}(\lambda)=Z\times\lambda\simeq Z_{k(\lambda)}$, on fixe un $k(\lambda)_{w}$- point rationnel $M_{w}$ pour toute $w\in\Omega_{k(\lambda)}\setminus S\otimes_{k}k(\lambda)$.  On pose $N_{w}=\Pi(M_{w})$ pour toute $w\in \Omega_{k(\lambda)}$. D'après la choix de $S$, on obtient alors l'égalité pour tout $b_{X}\in\Gamma_{X}$
$$\sum_{w\in \Omega_{k(\lambda)}}\inv_w( \langle N_w,b_{X} \rangle_{k(\lambda)_w})=0.$$

Par l'abus de notations, on note par $b_{X}$ aussi son image de la restriction $\Br (X')\to\Br (X\times\lambda)$. D'après la fonctorialité, cette égalité peut être vue comme l'accouplement de Brauer\textendash Manin sur $X\times \lambda$. On rappel que $k(\lambda)\in\K_{L}$, la restriction $\Br (X)/\Br (k)\to\Br (X\times\lambda)/\Br (k(\lambda))$ est un isomorphisme. Donc $\{N_{w}\}_{w\in\Omega_{k(\lambda)}}$ est orthogonal au groupe de Brauer de $X\times\lambda$.

D'après l'hypothèse, l'approximation faible donne un $k(\lambda)$-point rationnel $\theta$ sur $X\times\lambda$ suffisamment proche de $N_{w}$ pour toute $w\in S\otimes_{k}k(\lambda)$,
on peut aussi demander que la fibre $\Pi^{-1}(\theta)$ soit lisse.  
D'après le théorème des fonctions implicites $\Pi^{-1}(\theta)$ admet des $k(\lambda)$-points rationnels $M_{w}^{\theta}$ suffisamment proche de $M_{w}$ pour toute $w\in S\otimes_{k}k(\lambda)$. D'après la continuité de l'accouplement de Brauer\textendash Manin, on obtient
$$\sum_{w\in S\otimes_kk(\theta)}\inv_w( \langle M^{\theta}_w,b_{Z} \rangle_{k(\theta)_w})=0$$
pour tout $b_{Z}\in\Gamma_{Z}$. Par la choix de $S$, pour toute $w\in\Omega_{k(\lambda)}\setminus S\otimes_{k}k(\lambda)$ il existe des $k(\lambda)_{w}$-points rationnels $M^{\theta}_{w}$ sur la fibre $\Pi^{-1}(\theta)\simeq Y_{k(\lambda)}$, et de plus 
$$\sum_{w\in \Omega_{k(\theta)}}\inv_w( \langle M^{\theta}_w,b_{Z} \rangle_{k(\theta)_w})=0.$$
Ceci peut être vu comme une égalité sur la $k(\theta)$-variété $\Pi^{-1}(\theta)$.
Le corps résiduel $k(\theta)$ est linéairement disjoint avec $L$ sur $k$, donc  $\Gamma_{Z}$ se surjecte sur $\Br(\Pi^{-1}(\theta))/\Br (k(\theta))$. Par l'hypothèse que $(\E)$ est exacte pour $\Pi^{-1}(\theta)\simeq Y_{k(\theta)}$, il existe alors un 0-cycle global $z'$ de degré $1$ sur la $k(\theta)$-variété $\Pi^{-1}(\theta)$ ayant la même image que $M_{w}^{\theta}$ dans $\CH_{0}(\Pi^{-1}(\theta)_{{w}})/n$  pour toute $w\in S\otimes_{k}k(\theta)$, \textit{cf.} \cite[Thm. A]{Liang4}. Vu comme un 0-cycle de $Z'$, le 0-cycle $z'$ est de degré $\Delta\equiv\delta\mod mn\delta_{P}$. Quitte à soustraire un multiple convenable du point fermé  $P$, on obtient un 0-cycle $z$ de degré $\delta$. On vérifie que $z$ et $z_{v}$ ont la même image dans $\CH_{0}(Z'_{v})/n$ pour toute $v\in S$. Ceci termine la démonstration.
\end{proof}

\small
% \textbf{Acknowledgement.} 

% ------------------------------------------------------------------------
%GATHER{Xbib.bib}   % For Gather Purpose Only
%GATHER{Paper.bbl}  % For Gather Purpose Only

\bibliographystyle{alpha}
\bibliography{mybib1fr}
\end{document}